\newtheorem*{thm*}{Theorem A}
\newtheorem{thm}{Theorem}
\newtheorem{dfn}{Definition}
\newtheorem{exam}{Example}
\newtheorem{lemma}{Lemma}
\newtheorem{remark}{Remark}
\newtheorem{cor}{Corollary}
\newtheorem{prop}{Proposition}
\author{A. Aghajani\thanks{School of Mathematics, Iran University of Science and Technology, Narmak, Tehran, Iran. Email: aghajani@iust.ac.ir.} \and
    C. Cowan\thanks{Department of Mathematics, University of Manitoba, Winnipeg, Manitoba, Canada R3T 2N2. Email: craig.cowan@umanitoba.ca. Research supported in part by NSERC.} }
\begin{document}

\def\d{ \partial_{x_j} }
\def\Na{{\mathbb{N}}}

\def\Z{{\mathbb{Z}}}

\def\IR{{\mathbb{R}}}

\newcommand{\E}[0]{ \varepsilon}

\newcommand{\la}[0]{ \lambda}

\newcommand{\s}[0]{ \mathcal{S}}

\newcommand{\AO}[1]{\| #1 \| }

\newcommand{\BO}[2]{ \left( #1 , #2 \right) }

\newcommand{\CO}[2]{ \left\langle #1 , #2 \right\rangle}

\newcommand{\R}[0]{ \IR\cup \{\infty \} }

\newcommand{\co}[1]{ #1^{\prime}}

\newcommand{\p}[0]{ p^{\prime}}

\newcommand{\m}[1]{   \mathcal{ #1 }}

\newcommand{ \W}[0]{ \mathcal{W}}

%  norm of H
\newcommand{ \A}[1]{ \left\| #1 \right\|_H }

% inner product H
\newcommand{\B}[2]{ \left( #1 , #2 \right)_H }

% H^* , H pairing
\newcommand{\C}[2]{ \left\langle #1 , #2 \right\rangle_{  H^* , H } }

 \newcommand{\HON}[1]{ \| #1 \|_{ H^1} }

% Omega    \Om
\newcommand{ \Om }{ \Omega}

% \partial Omega      \pOm
\newcommand{ \pOm}{\partial \Omega}

%   D(\Omega)   \D
\newcommand{\D}{ \mathcal{D} \left( \Omega \right)}

% D'( Omega)        \DP
\newcommand{\DP}{ \mathcal{D}^{\prime} \left( \Omega \right)  }

% D' pairing
\newcommand{\DPP}[2]{   \left\langle #1 , #2 \right\rangle_{  \mathcal{D}^{\prime}, \mathcal{D} }}

% (H^1)^* , H^1    (( pairing ))      \PHH
\newcommand{\PHH}[2]{    \left\langle #1 , #2 \right\rangle_{    \left(H^1 \right)^*  ,  H^1   }    }

%  H^{-1} , H_0^1  (( pairing ))   \PHO
\newcommand{\PHO}[2]{  \left\langle #1 , #2 \right\rangle_{  H^{-1}  , H_0^1  }}

 %  H^1(\Omega)     \HO
 \newcommand{\HO}{ H^1 \left( \Omega \right)}

%  H_0^1( \Omega)       \HOO
\newcommand{\HOO}{ H_0^1 \left( \Omega \right) }

% C_c^\infty(omega)
\newcommand{\CC}{C_c^\infty\left(\Omega \right) }

%H_0^1(Omega)  norm
\newcommand{\N}[1]{ \left\| #1\right\|_{ H_0^1  }  }

%H_0^1(Omega)   innerproduct
\newcommand{\IN}[2]{ \left(#1,#2\right)_{  H_0^1} }

% H^1(\Omega) inner product
\newcommand{\INI}[2]{ \left( #1 ,#2 \right)_ { H^1}}

% (H^1(\Omega))^*
\newcommand{\HH}{   H^1 \left( \Omega \right)^* }

% ( H^{-1}(\Omega))
\newcommand{\HL}{ H^{-1} \left( \Omega \right) }

\newcommand{\HS}[1]{ \| #1 \|_{H^*}}

\newcommand{\HSI}[2]{ \left( #1 , #2 \right)_{ H^*}}

\newcommand{\WO}{ W_0^{1,p}}
\newcommand{\w}[1]{ \| #1 \|_{W_0^{1,p}}}

\newcommand{\ww}{(W_0^{1,p})^*}

\newcommand{\Ov}{ \overline{\Omega}}

\title{Explicit  estimates on positive supersolutions of nonlinear elliptic equations and applications}
\maketitle

\begin{abstract}
In this paper we consider positive supersolutions of the nonlinear elliptic equation
\[- \Delta u = \rho(x) f(u)|\nabla u|^p, \qquad \hfill \mbox{ in } \Omega,\]
where $0\le p<1$, $ \Omega$ is an arbitrary   domain (bounded or unbounded) in $ \IR^N$ ($N\ge 2$), $f: [0,a_{f}) \rightarrow \Bbb{R}_{+}$ $(0 < a_{f} \leqslant +\infty)$ is a  non-decreasing continuous function and $\rho: \Omega \rightarrow \IR$ is a positive function. Using the maximum principle we give explicit estimates on positive supersolutions $u$ at each point $x\in\Omega$ where $\nabla u\not\equiv0$ in a neighborhood of $x$. As consequences, we discuss the dead core set of  supersolutions on bounded domains, and also obtain Liouville type results in unbounded domains $\Omega$ with the property that $\sup_{x\in\Omega}dist (x,\partial\Omega)=\infty$. 

\end{abstract}

\noindent
{\it \footnotesize 2010 Mathematics Subject Classification}. {\scriptsize }\\
{\it \footnotesize Key words: : Nonlinear elliptic problems; Liouville type theorems; Dead Core, supersolutions, gradient term}. {\scriptsize }

\section{Introduction and main estimates}

The aim of this paper is to give explicit  estimates on   positive  classical supersolutions of the following elliptic equation
 \begin{equation}\label{original}
- \Delta u = \rho(x) f(u)|\nabla u|^p, \qquad \hfill \mbox{ in } \Omega, \\
\end{equation}
 where $ \Omega$ is an arbitrary   domain (bounded or unbounded) in $ \IR^N$, $0\le p<1$ and $f,\rho$ satisfy\\

($\mathcal{C}$) $f:D_{f}= [0,a_{f}) \rightarrow [0,\infty)$ $(0 < a_{f} \leqslant +\infty)$ is a  non-decreasing continuous function and $\rho: \Omega \rightarrow R$ is a positive function. Also we assume that $f(u)>0$  for $u>0$.\\

By a positive classical supersolution we mean a positive function $u\in C^2(\Omega)$ such  that $- \Delta u \ge \rho(x) f(u)|\nabla u|^p$, for all $x\in\Omega$.  Note in the case when $f$  in $(\ref{original})$ is not monotone we can still use our results  if we additionally have $\inf_{s> s_{0}} f(s)>0$ for every $s_{0}>0$. Indeed in this case one can take $g(t):=\inf_{s\geq t} f(s)$ then $g$ is non-decreasing and every supersolution $u$ of $(\ref{original})$ is  also a supersolution of $- \Delta u = \rho(x) g(u)|\nabla u|^p$ in $ \Omega$.\\

In this paper,  we give explicit estimates on positive classical  supersolutions $u$ of $(\ref{original})$ at each point $x\in\Omega$ where $\nabla u\not\equiv0$ in a neighborhood of $x$. As we shall see, the simplicity and robustness of our  maximum principle-based estimates provide
for their applicability to many quasi-linear elliptic inequalities on arbitrary domains in $\IR^N$, bounded or unbounded. The applications we are interested in applying the pointwise estimate to are: Liouville type theorems related to (\ref{original}) in unbounded domains such as $\IR^N$, $\IR^N_+$, exterior domains or generally unbounded domains with the property that
$\sup_{x\in\Omega}dist (x,\partial\Omega)=\infty$, and also we discuss issues related supersolutions of (\ref{original}) on bounded domains which have dead cores. \\
In particular we apply our results to the equation
 \begin{equation}\label{int2}-\Delta u= |x|^\beta u^q |\nabla u|^p,~~x\in\Omega,
\end{equation}
where $\beta\in \IR$, $q>0$, $0\le p<1$ and $\Omega$ is an arbitrary
domain in $\IR^N$; note importantly that we allow $p=0$,  and hence we obtain results regarding semilinear equations.  The existence and nonexistence of  positive
supersolutions of $(\ref{int2})$ when $\Omega$ is an exterior domain in $\IR^N$, in particular in the case $\beta=0$
and some similar  equations  have been studied
extensively  in  recent years, see
\cite{AS1,AS2,AMQ1,AMQ2,AMQ3,AMQ4,ADJT,Verons1,B,BV1,BMQ,CM,F,FQS,JS,V1}.\\ In particular, recently Burgos-PÃ‚Â´erez,  Garcia-MeliÃ‚Â´an and Quaas in \cite{BMQ} considered positive supersolutions of the equation $-\Delta u=f(u)|\nabla u|^q$ posed in exterior domains of $\IR^N$, where
$f$ is continuous in $[0, \infty)$ and positive in $(0, \infty)$ and $q > 0$. They classified supersolutions
$u$ into four types depending on the function $m(R) = \inf_{|x|=R} u(x)$
for large $R$, and give necessary and sufficient conditions in order to have supersolutions
of each of these types. As  consequences, they obtained many interesting Liouville
theorems for supersolutions depending on the values of $N, q$ and on some integrability
properties on $f$ at zero or infinity. \\
Also, very recently Bidaut-VÃ‚Â´eron,
 Garcia-Huidobro and  VÃ‚Â´eron in \cite{Verons1} obtained several important results on positive supersolutions of equation $(\ref{int2})$ (with $\beta=0$) in $\Omega\setminus\{0\}$ where $\Omega$ is an open subset of $\IR^N$ containing $0$, $p$ and $q$ are real exponents. It worth mentioning  that problem $(\ref{original})$  has
been studied in \cite{BD,F} for some more general operators when $0 < p < 1$ and $f(u)$ is essentially like $u^q$.\\

As  a simple application of our explicit estimates on supersolutions of $(\ref{original})$, we see that if $\frac{q}{1-p}>1$ then every positive supersolution of $(\ref{int2})$ is
eventually constant if
\[(N-2)q+p(N-1)< N+\beta.\]

As some other applications of our main estimates we also examine equation
(\ref{original}) for  nonlinearities like
\[f(u)=u^q+u^r~~or ~~f(u)=\max\{u^q, u^r\},~~0<q<1-p<r,\]
or singular nonlinearity $f(u)=\frac{1}{(1-u)^q}$, ($q>1$),  and discuss the dead core set of positive supersolutions in bounded domains. In particular we show that there exists a $\beta>0$ (we give the explicit value of $\beta$) such that if a domain $\Omega$ satisfies
\[\sup_{x\in\Omega}d_\Omega(x)>\beta,\]
then every positive solution $u$ of $-\Delta u\ge f(u)|\nabla u|^p$, with the above nonilnearities $f$, must be a dead core solution.

\begin{dfn}  (Dead Core Solutions). We call a non-negative nonzero solution  $u$ of (\ref{original}) a dead core solution
provided $K^0_u$
is nonempty; here $K^0_u$  denotes the interior of $K_u := \{x  \in \Omega: \nabla u(x) = 0\}.$\\
\end{dfn}

Define, for a given positive supersolution $u$ of $(\ref{original})$,
\[m_x(r)=\inf_{y\in B_r(x)}u(y)~~and~~\rho_x(r)=\inf_{y\in B_r(x)}\rho(y)~~for~~0<r<d_\Omega(x):=dist(x,\partial\Omega).\]
Note when $\Omega=R^N$ we set $d_\Omega(x)=\infty$. Also we set
\[\alpha_{N,p}=\frac{1-p}{2-p}\Big(N+\frac{p}{1-p}\Big)^{\frac{-1}{1-p}}~~\text{for}~~0\le p<1.\]

\begin{thm}    Suppose $f$ and $\rho$ satisfy $(\mathcal{C})$, $ \Omega$ is an arbitrary domain in $ \IR^N$ and $u$
 is a positive classical supersolution of $(\ref{original})$.\\
 i) If $0< p<1$ then for all $ x \in \Omega \backslash K_u^0$  we
 have
\begin{equation}\label{main-ineq1}
\int_{m_{x}(r)}^{u(x)}\frac{ds}{f(s)^{\frac{1}{1-p}}}\geq \frac{2-p}{1-p}\alpha_{N,p}\int_0^r (s\rho_{x}(s))^{\frac{1}{1-p}}  ds,~~for~~~0<r<d_\Omega(x).
\end{equation}
In particular, when $\rho\equiv1$ we get

\begin{equation}\label{main-ineq2}
\int_{m_{x}(r)}^{u(x)}\frac{ds}{f(s)^{\frac{1}{1-p}}}\geq \alpha_{N,p} r^{\frac{2-p}{1-p}},~~~0<r<d_\Omega(x).
\end{equation}
ii) When $p=0$ the above estimates are true for all $x\in\Omega$.
\end{thm}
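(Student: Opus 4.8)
The plan is to use the maximum principle on balls, comparing $u$ with an explicit radial function built to turn the differential inequality into the claimed integral inequality. Fix $x \in \Omega \setminus K_u^0$ and $0 < r < d_\Omega(x)$; after a translation assume $x = 0$. Set $m = m_x(r) = \inf_{B_r} u$ and $\mu = \rho_x(r) = \inf_{B_r}\rho$, so that on $B_r$ we have $-\Delta u \ge \mu f(u) |\nabla u|^p$ with $u \ge m$. Since $f$ is non-decreasing and $u(0) > m$ (the point $0$ is not in the interior of $K_u$, so $u$ is non-constant near $0$; actually I need $u(0) > m_x(r)$, which holds unless $u$ is constant on all of $B_r$, a case that can be excluded or handled separately), I want to find the "worst" radial supersolution with these boundary data and read off the estimate from it.

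The key computation is to look for a radial function of the form $v = \Phi(|y|)$ that satisfies $-\Delta v = \mu f(v)|\nabla v|^p$ with equality, or more precisely to posit $v(y) = G(\psi(|y|))$ where $G$ inverts the map $t \mapsto \int_0^t f(s)^{-1/(1-p)}\,ds$, so that the nonlinearity $f(v)$ gets absorbed. Concretely, define $F(t) = \int_{?}^{t} f(s)^{-1/(1-p)}\,ds$; the natural ansatz makes $F(u(0)) - F(m)$ comparable to $\int_0^r (s\mu)^{1/(1-p)}\,ds$ up to the constant $\frac{2-p}{1-p}\alpha_{N,p}$. The precise route: on $B_r$, introduce $w = F(u)$ (so $\nabla w = f(u)^{-1/(1-p)}\nabla u$) and compute $-\Delta w = f(u)^{-1/(1-p)}(-\Delta u) + \frac{1}{1-p} f(u)^{-1/(1-p)-1} f'(u)|\nabla u|^2$; using monotonicity of $f$ the second term has a sign, and the first is $\ge \mu f(u)^{1 - 1/(1-p)} |\nabla u|^p = \mu f(u)^{-p/(1-p)}|\nabla u|^p = \mu |\nabla w|^p$. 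So $w$ satisfies $-\Delta w \ge \mu |\nabla w|^p$ on $B_r$ — the nonlinearity $f$ has been completely eliminated. (Here $f$ is only assumed continuous, so $f'$ may not exist; I would instead justify this by an approximation argument, or by working directly with the function $F\circ u$ and noting $F$ is convex/concave appropriately, or by a comparison argument avoiding the chain rule on $f$. This technical point is the first obstacle.)

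Now the problem reduces to the $f \equiv 1$, $\rho \equiv \mu$ case: given $w \in C^2(B_r)$ positive with $-\Delta w \ge \mu|\nabla w|^p$ and $w \ge w_0 := \inf_{B_r} w$ attained or approached on $\partial B_r$, show $w(0) - w_0 \ge \frac{2-p}{1-p}\alpha_{N,p}\int_0^r (s\mu)^{1/(1-p)}\,ds$. For this I would construct the explicit radial comparison function: solve the ODE $-(\phi'' + \frac{N-1}{s}\phi') = \mu |\phi'|^p$ on $(0,r)$ — equivalently $-(s^{N-1}\phi')' = \mu s^{N-1}|\phi'|^p$ — with $\phi'(0) = 0$, $\phi(r) = 0$ (so $\phi \le 0$ and $\phi$ is decreasing, meaning $w_0 + \phi$... let me instead make $\phi$ the function with $\phi(0) = 0$, $\phi$ increasing, and compare $w - w_0 \ge \phi(r-|y|)$ or similar). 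The ODE is separable in $q = -\phi'$: $(s^{N-1}q)' = \mu s^{N-1} q^p$ can be bounded below by solving it as an equality, yielding $q(s) \ge$ an explicit power of $s$ times $\mu^{1/(1-p)}$, and integrating gives exactly the constant $\alpha_{N,p}$ with the exponent $\frac{2-p}{1-p}$ — this is where $\alpha_{N,p} = \frac{1-p}{2-p}(N + \frac{p}{1-p})^{-1/(1-p)}$ comes from. Then the maximum principle (comparison principle for the operator $-\Delta w - \mu|\nabla w|^p$, which is proper/degenerate-elliptic) on $B_r$ with matching boundary values gives $w(0) - w_0 \ge \phi(r)$, which after substituting back $w = F(u)$ yields (\ref{main-ineq1}). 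Part (ii): when $p = 0$ the inequality $-\Delta u \ge \rho f(u)$ involves no gradient, so there is no need to restrict to $\Omega \setminus K_u^0$ — the argument above goes through at every $x \in \Omega$ since the reduction to $-\Delta w \ge \mu$ never used $\nabla w \ne 0$. The particular case $\rho \equiv 1$ in (\ref{main-ineq2}) is just $\int_0^r s^{1/(1-p)}ds = \frac{1-p}{2-p}r^{(2-p)/(1-p)}$, which cancels the $\frac{1-p}{2-p}$ and leaves $\alpha_{N,p} r^{(2-p)/(1-p)}$.

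The main obstacle I anticipate is two-fold: (1) justifying the change of variables $w = F(u)$ rigorously when $f$ is merely continuous and non-decreasing (the cleanest fix is probably to approximate $f$ from below by smooth strictly-increasing functions, prove the estimate for those, and pass to the limit, or to verify the comparison inequality directly for $F \circ u$ using that $F$ is $C^1$ with $F' = f^{-1/(1-p)}$ monotone); and (2) setting up the comparison principle on $B_r$ correctly — one must take care that the comparison function is built from $m_x(r)$ and that boundary values are ordered, and that the degenerate term $|\nabla w|^p$ does not obstruct the maximum principle (it does not, since the map $\xi \mapsto -|\xi|^p$ is not an issue for the weak comparison principle when combined with $-\Delta$; alternatively touch from below and use the equation pointwise at the touching point, being careful where $\nabla w = 0$, which is exactly why $x \notin K_u^0$ matters when $p > 0$).
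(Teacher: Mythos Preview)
Your route is genuinely different from the paper's and, as written, leaves one real gap. The paper does \emph{not} perform the substitution $w=F(u)$; instead it freezes both $\rho$ and $f$ on $B_r(x_0)$ to the constants $\rho_{x_0}(r)$ and $f(m_{x_0}(r))$, writes down the explicit radial solution $w_r(y)=\alpha_{N,p}\,\rho_{x_0}(r)^{1/(1-p)}f(m_{x_0}(r))^{1/(1-p)}\big(r^{q}-|y-x_0|^{q}\big)$ with $q=\frac{2-p}{1-p}$, and proves $u-w_r$ attains its minimum on $\partial B_r$ via the trick $v_s=u-sw_r$, $s\in(0,1)$: at an interior minimum $y\neq x_0$ one gets $\Delta v_s(y)\le (s-s^p)\rho_{x_0}(r)f(m_{x_0}(r))|\nabla w_r(y)|^p<0$, a contradiction (and $y\neq x_0$ because $\nabla u(x_0)\neq 0$ --- this is exactly where $x\notin K_u^0$ enters). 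The crucial extra step you are missing is that the paper then \emph{varies $r$}: from $u-m_{x_0}(r)\ge w_r$ it extracts the differential inequality $-m_{x_0}'(r)\ge q\alpha_{N,p} r^{q-1}\rho_{x_0}(r)^{1/(1-p)}f(m_{x_0}(r))^{1/(1-p)}$ and integrates in $r$. It is this ODE-and-integrate step that puts $\rho_x(s)$ (not the constant $\rho_x(r)$) inside the integral. Your one-shot comparison on $B_r$ with the frozen constant $\mu=\rho_x(r)$ yields only $\frac{2-p}{1-p}\alpha_{N,p}\int_0^r(s\rho_x(r))^{1/(1-p)}ds$, which is strictly weaker since $\rho_x(s)\ge\rho_x(r)$ for $s\le r$; so as stated you have not proved (\ref{main-ineq1}). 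The fix is either to run the paper's ODE argument, or (staying with your substitution) to keep the full $\rho(y)$ in $-\Delta w\ge\rho(y)|\nabla w|^p$ and build the radial subsolution from the ODE $-(s^{N-1}\phi')'=\rho_x(s)s^{N-1}|\phi'|^p$, using $\rho_x(|y|)\le\rho(y)$.

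On the regularity issue you flag: the paper sidesteps it entirely by freezing $f(m_{x_0}(r))$ as a constant, so only continuity of $f$ is needed. In fact the paper's Remark~2 explicitly records your substitution $w=F(u)$ as a simplification available when $f$ is $C^1$ increasing --- so your approach is correct in that regime but is presented in the paper as the \emph{special-case} argument, not the general one. What the freezing-plus-ODE approach buys is that it works directly for merely continuous non-decreasing $f$ without any approximation, and it delivers the sharper $\rho_x(s)$ integrand automatically; what your substitution buys is conceptual clarity (the problem collapses to $-\Delta w\ge\rho|\nabla w|^p$) at the cost of an approximation step or an extra hypothesis on $f$.
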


The above theorem leads us to the following explicit estimates on supersolutions of $(\ref{original})$ depending on weather or not $f^{\frac{-1}{1-p}}\in L^{1}(0,a)$ for  $0<a<a_{f}$.

\begin{prop} Suppose $\Omega$ is an arbitrary  domain in $\IR^{N}$ and  $u$ is a positive  classical supersolution of
$(\ref{original})$   with  $0\le p<1$ in $\Omega$. If
$f^{\frac{-1}{1-p}}\in L^{1}(0,a)$ for  $0<a<a_{f}$  then
\begin{equation} \label{app1}
u(x)\geq F^{-1}\Big(\alpha_{N,p}\frac{2-p}{1-p}\int_0^r (s\rho_{x}(s))^{\frac{1}{1-p}}  ds\Big)~~for~~x\not\in K_u^0,
\end{equation}
where $F(t):=\int_{0}^{t}\frac{ds}{f(s)^{\frac{1}{1-p}}}$, $0<t<a_{f}$.\\
In particular, when $\rho\equiv1$ we have
\begin{equation}\label{app2}
u(x)\geq F^{-1}\Big(\alpha_{N,p} d_\Omega(x)^{\frac{2-p}{1-p}}\Big)~~for~~x\not\in K_u^0.
\end{equation}

As a consequence, if
\begin{equation}\label{app3}
 \alpha_{N,p}\frac{2-p}{1-p}\int_0^{d_\Omega(x),} (s\rho_{x}(s))^{\frac{1}{1-p}}  ds>||F||_{\infty}=\int_{0}^{a_{f}}\frac{ds}{f(s)^{\frac{1}{1-p}}},
\end{equation}
then in case $0<p<1$ we have $\nabla u\equiv0$ in a neighborhood
of $x$, means that $u$ is a deadcore solution, and in case $p=0$ equation $(\ref{original})$ has no any positive supersolution. When $\rho\equiv1$ then $(\ref{app3})$ reads as

\begin{equation}\label{app4}
\alpha_{N,p} d_\Omega(x)^{\frac{2-p}{1-p}}>\|F\|_{\infty}.
\end{equation}
Moreover, if $\|F\|_{\infty}<\infty$ then in  case $\Omega=\IR^N$ every positive supersolution $u$ is  constant when $0<p<1$, and there is no positive supersolution when $p=0$. Also, when $\Omega$ is an unbounded domain with the property that
\[\sup_{x\in\Omega}d_\Omega(x)=\infty\]
then every positive supersolution $u$ is  constant in the region $\Omega_M^c$ for some $M>0$ where
\[\Omega_M=\{x\in\Omega;~dist(x,\partial\Omega)<M)\},\]
while  there is no positive supersolution when $p=0$.
\end{prop}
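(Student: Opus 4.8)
The plan is to extract everything from the pointwise inequality \eqref{main-ineq1} (respectively \eqref{main-ineq2}) of the Theorem. First I would record the elementary structure of $F(t)=\int_0^t f(s)^{-1/(1-p)}\,ds$: under the hypothesis $f^{-1/(1-p)}\in L^1(0,a)$ for every $a<a_f$ the integrand is locally integrable on $[0,a_f)$ and, since $f$ is continuous and strictly positive on $(0,\infty)$, it is positive a.e.; hence $F$ is continuous, strictly increasing, $F(0)=0$, and it maps $[0,a_f)$ homeomorphically onto $[0,\|F\|_\infty)$ with a continuous strictly increasing inverse $F^{-1}$. Now fix $x\in\Omega\setminus K_u^0$ (any $x\in\Omega$ when $p=0$). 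Since $0\le m_x(r)\le u(x)<a_f$, the left-hand side of \eqref{main-ineq1} equals $F(u(x))-F(m_x(r))\le F(u(x))$, so \eqref{main-ineq1} becomes
\[F(u(x))\ \ge\ \alpha_{N,p}\,\frac{2-p}{1-p}\int_0^r (s\,\rho_x(s))^{1/(1-p)}\,ds\qquad(0<r<d_\Omega(x)).\]
The $r$-integral is nondecreasing in $r$, so as $r\uparrow d_\Omega(x)$ it converges (monotonically, possibly to $+\infty$) to $\int_0^{d_\Omega(x)}(s\rho_x(s))^{1/(1-p)}\,ds$. As long as the right-hand side stays $<\|F\|_\infty$, applying the increasing map $F^{-1}$ gives \eqref{app1}; the case $\rho\equiv1$ follows from $\int_0^r s^{1/(1-p)}\,ds=\frac{1-p}{2-p}\,r^{(2-p)/(1-p)}$, which turns the bound into \eqref{app2}.

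For the consequences, suppose \eqref{app3} holds at some $x$. Passing to the limit $r\uparrow d_\Omega(x)$ in the displayed inequality forces $F(u(x))\ge \alpha_{N,p}\frac{2-p}{1-p}\int_0^{d_\Omega(x)}(s\rho_x(s))^{1/(1-p)}\,ds>\|F\|_\infty$; but $u(x)<a_f$ gives $F(u(x))<\|F\|_\infty$, a contradiction. When $p=0$ the displayed inequality was available at every point of $\Omega$, so the contradiction rules out the existence of a positive supersolution altogether. When $0<p<1$ the inequality was used only at points of $\Omega\setminus K_u^0$, so the contradiction instead says $x\in K_u^0$, i.e.\ $\nabla u\equiv0$ in a neighbourhood of $x$, which is exactly the dead-core conclusion. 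The reformulation \eqref{app4} is again the computation $\int_0^{d_\Omega(x)} s^{1/(1-p)}\,ds=\frac{1-p}{2-p}d_\Omega(x)^{(2-p)/(1-p)}$.

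Finally the Liouville statements, under $\|F\|_\infty<\infty$ and $\rho\equiv1$. If $\Omega=\IR^N$ then $d_\Omega(x)=\infty$ at every $x$, so \eqref{app4} holds everywhere: for $p=0$ there is no positive supersolution, and for $0<p<1$ every point of $\IR^N$ lies in $K_u^0$, hence $\nabla u\equiv0$ on $\IR^N$ and $u$ is constant. If $\Omega$ is unbounded with $\sup_{x\in\Omega}d_\Omega(x)=\infty$, pick $M>0$ with $\alpha_{N,p}M^{(2-p)/(1-p)}>\|F\|_\infty$ (possible because $\|F\|_\infty<\infty$); then every $x$ with $d_\Omega(x)>M$ satisfies \eqref{app4}, so for $p=0$ no positive supersolution exists, while for $0<p<1$ the nonempty open set $\{x\in\Omega:d_\Omega(x)>M\}$ is contained in $K_u^0$, so $\nabla u\equiv0$ there and, using continuity up to $\{d_\Omega=M\}$, $u$ is constant on $\Omega_M^c$ (on each connected component, should it be disconnected).

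The genuinely routine ingredients are the monotonicity and continuity of $F$ and the elementary integral $\int_0^r s^{1/(1-p)}\,ds$. The one place that needs care is the passage $r\uparrow d_\Omega(x)$ together with the dichotomy it produces — either the limiting right-hand side is $<\|F\|_\infty$, in which case $F^{-1}$ is legitimately applied and one obtains the explicit lower bound, or it is $\ge\|F\|_\infty$, in which case $F(u(x))<\|F\|_\infty$ is violated — and keeping straight that for $p=0$ such a violation means non-existence (the estimate holds at every point) whereas for $0<p<1$ it means the point sits in the dead core.
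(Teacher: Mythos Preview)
Your proof is correct and follows essentially the same approach as the paper: both start from the main inequality \eqref{main-ineq1} of Theorem 1, rewrite the left side as $F(u(x))-F(m_x(r))\le F(u(x))$, and then invert $F$. The paper compresses everything after that into a single sentence (``the above inequality easily gives the desired results''), whereas you spell out the monotonicity of $F$, the limit $r\uparrow d_\Omega(x)$, and the dichotomy between the lower-bound case and the contradiction/dead-core case --- but the substance is identical.
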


\begin{remark} Note that using the above estimates of Preposition 1, where we assumed $f^{\frac{-1}{1-p}}\in L^{1}(0,a)$ for  $0<a<a_{f}$, we can formulate several general Liouville-type results on supersolutions of $(\ref{original})$ in unbounded domains, depending on $f$ and $\rho$, but we prefer to do this in concrete examples in  Section 3.
\end{remark}
For the case when $f^{\frac{-1}{1-p}}\not \in L^{1}(0,a)$ for $a>0$ we have

\begin{prop} Suppose $\Omega$ is an arbitrary  domain in $\IR^{N}$ and  $u$ is a positive classical  supersolution of
$(\ref{original})$  in $\Omega$.  If $f^{\frac{-1}{1-p}}\not \in L^{1}(0,a)$ for $a>0$, and $x\not\in K_u^0$, then
\begin{equation}\label{app5}
 \inf_{y\in B_{r}(x)}u(y)\leq G^{-1}\Big(\alpha_{N,p}\frac{2-p}{1-p}\int_0^r (s\rho_{x}(s))^{\frac{1}{1-p}}  ds\Big).
\end{equation}
where in this case $G$ is a positive primitive of the function $\frac{-1}{f^{\frac{1}{1-p}}}$ on $(0,a_{f})$ with $F(0)=\infty$.\\
In particular, when $\rho\equiv1$ we have
\begin{equation}\label{app6}
\inf_{y\in B_{r}(x)}u(y)\leq G^{-1}\Big(\alpha_{N,p} r^{\frac{2-p}{1-p}}\Big)~~for~~x\not\in K_u^0,~~0<r<d_\Omega(x).
\end{equation}
As a consequence, if $\Omega$ is an exterior domain in $\IR^N$ and $0<p<1$ then $u$ is eventually constant provided the following\\
\begin{equation}\label{app7}
N=2~~and~~\liminf_{r\rightarrow\infty}G^{-1}\Big(\alpha_{N,p}\frac{2-p}{1-p}\int_0^r (s\rho_{x}(s))^{\frac{1}{1-p}}  ds\Big)=0,\
\end{equation}
or
\begin{equation}\label{app8}
N>2~~and~~\liminf_{r\rightarrow\infty}r^{N-2}G^{-1}\Big(\alpha\frac{2-p}{1-p}\int_0^r (s\rho_{x}(s))^{\frac{1}{1-p}}  ds\Big)=0.
\end{equation}
Also when $\Omega$ is an exterior domain in $\IR^N$ and $p=0$ then there is no  positive supersolution provided $(\ref{app7})$ or $(\ref{app8})$ hold.
\end{prop}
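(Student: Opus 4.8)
The plan is to read inequality $(\ref{main-ineq1})$ of Theorem 1 through the primitive $G$ to obtain the pointwise bound $(\ref{app5})$, and then to play that bound against the elementary lower bounds a positive superharmonic function satisfies on an exterior domain. For the first step, fix $x\notin K_u^0$ and abbreviate $\Phi(r):=\alpha_{N,p}\frac{2-p}{1-p}\int_0^r(s\rho_x(s))^{1/(1-p)}\,ds$, a non-decreasing function of $r$. Since $f^{-1/(1-p)}\notin L^1(0,a)$ for every $a>0$, the primitive $G$ of $-f^{-1/(1-p)}$ on $(0,a_f)$ is strictly decreasing and positive with $G(0^+)=+\infty$, so $G^{-1}$ is well defined and strictly decreasing. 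As $\int_{m_x(r)}^{u(x)}f(s)^{-1/(1-p)}\,ds=G(m_x(r))-G(u(x))$ with $G(u(x))>0$, Theorem 1 gives $G(m_x(r))\ge\Phi(r)$ for $0<r<d_\Omega(x)$, and applying $G^{-1}$ yields $\inf_{B_r(x)}u=m_x(r)\le G^{-1}(\Phi(r))$; this is $(\ref{app5})$, and $(\ref{app6})$ follows with $\rho\equiv1$ using $\int_0^r s^{1/(1-p)}\,ds=\frac{1-p}{2-p}r^{(2-p)/(1-p)}$. For $p=0$, part (ii) of Theorem 1 makes all of this valid at every $x\in\Omega$.

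Next I would record the lower bounds. Suppose $\{|x|>R_0\}\subseteq\Omega$, so $\partial\Omega\subseteq\{|x|\le R_0\}$ and hence $d_\Omega(x)\ge|x|-R_0$ for $|x|>R_0$. Since $-\Delta u\ge\rho f(u)|\nabla u|^p\ge0$, $u$ is positive superharmonic on $\{|x|>R_0\}$. Fix $R_1>R_0$, set $\delta:=\inf_{|x|=R_1}u>0$, and compare $u$ on each annulus $\{R_1<|x|<R\}$ with the harmonic function equal to $\delta$ on $\{|x|=R_1\}$ and to $0$ on $\{|x|=R\}$; letting $R\to\infty$ gives $u(x)\ge\delta$ on $\{|x|>R_1\}$ when $N=2$, and $u(x)\ge c_0|x|^{2-N}$ on $\{|x|>R_1\}$ with $c_0:=\delta R_1^{N-2}$ when $N\ge3$. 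Therefore, whenever $|x|>R_1$ and $0<r<|x|-R_1$, the ball $B_r(x)$ lies in $\{|y|>R_1\}$ and $\inf_{B_r(x)}u\ge\delta$ (if $N=2$), while $\inf_{B_r(x)}u\ge c_0(|x|+r)^{2-N}$ (if $N\ge3$).

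To finish, assume $u$ is not eventually constant; then $\{\nabla u\ne0\}$ is unbounded, so choose $x_k$ with $|x_k|\to\infty$ and $x_k\notin K_u^0$, and put $r_k:=|x_k|-R_1-1$, so $0<r_k<d_\Omega(x_k)$, $r_k\to\infty$, and $|x_k|+r_k\le3r_k$ for $k$ large. Feeding $(\ref{app5})$ at $(x_k,r_k)$ into the lower bounds gives $\delta\le G^{-1}(\Phi(r_k))$ when $N=2$, and $r_k^{N-2}G^{-1}(\Phi(r_k))\ge c_0\,3^{2-N}>0$ when $N\ge3$. Since $r\mapsto G^{-1}(\Phi(r))$ is non-increasing, the first contradicts $(\ref{app7})$; the second contradicts $(\ref{app8})$. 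When $p=0$ no assumption on $u$ is needed: $-\Delta u\ge\rho f(u)>0$ wherever $u>0$ forces $K_u^0=\emptyset$, so $(\ref{app5})$ holds at every $x$ and every scale, giving $r^{N-2}G^{-1}(\Phi(r))\ge c_0\,3^{2-N}$ for all large $r$, which already excludes the existence of a positive supersolution under $(\ref{app7})$ or $(\ref{app8})$.

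The main obstacle is this last step. One must apply $(\ref{app5})$ with a radius $r$ of the same order as $|x|$ — which is precisely what $d_\Omega(x)\ge|x|-R_0$ permits on an exterior domain — so that the far part of $B_r(x)$, where the barrier bound $c_0|y|^{2-N}$ is smallest, is only a bounded factor larger than $r$; and one must line up the scales $r_k$ with the sequence realizing the $\liminf$ in $(\ref{app7})$/$(\ref{app8})$. For $N=2$ this is automatic since $G^{-1}(\Phi(\cdot))$ is monotone, and for $p=0$ it is automatic since every scale is admissible; for $0<p<1$ and $N\ge3$ one must use the failure of $u$ to be eventually constant to locate points with $\nabla u\ne0$ at arbitrarily large scales, which is the delicate point.
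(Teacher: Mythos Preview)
Your proof is correct and follows essentially the same route as the paper: derive $(\ref{app5})$ from Theorem 1 by rewriting the left side as $G(m_x(r))-G(u(x))$, then play it against the classical lower bounds for positive superharmonic functions on an exterior domain with $r$ comparable to $|x|$ to force a contradiction with $(\ref{app7})$/$(\ref{app8})$. The only cosmetic differences are that the paper quotes those lower bounds as Lemma 1 (Serrin--Zou) rather than rederiving them by your barrier comparison, and takes the specific choice $r=|x|/2$; in fact you are more careful than the paper about the alignment of the admissible scales $r_k$ with the sequence realizing the $\liminf$ (your monotonicity remark for $N=2$, and your explicit flagging of the $N\ge 3$ case), a point the paper's proof glosses over entirely.
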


\section{Proof of the main results}
\textbf{Proof of Theorem 1.}
Assume $0\le p<1$ and let   $u$  be a positive supersolution of $(\ref{original})$. Fix an $x_0\in \Omega$ with $\nabla u(x_0)\neq0$ and $0<r<d_\Omega(x_0)$. Then we have
\begin{equation}\label{thm1-1}
-\Delta u\geq \rho_{x_0}(r) f(m_{x_0}(r))|\nabla u|^p ~~~in~~B_r(x_0).
\end{equation}
Now set
\[w_r(y)=\alpha    \rho_{x_0}(r)^{\frac{1}{1-p}}    f(m_{x_0}(r))^{\frac{1}{1-p}}(r^q-|y-x_0|^q),\]
 where  $\alpha:=\alpha_{N,p} $ and $q:=\frac{2-p}{1-p}$. Then we have
 \begin{equation}\label{thm1-2}
 -\Delta w_r=\rho_{x_0}(r) f(m_{x_0}(r))|\nabla w_r|^p~~ in~B_r(x_0)~and ~w_r\equiv0~~ on ~\partial B_r(x_0).
 \end{equation}
 Here we  show that the function $u-w_r$ takes its minimum on $\partial B_r(x_0)$. When $p=0$ this is obvious by the maximum principle and that we have $-\Delta (u-w_r)\ge0$ in $B_r(x_0)$. When $0<p<1$, take an $s\in(0,1)$ and set  $v_s=u-sw_r$. We show that $v_s$ takes its minimum on $\partial B_r(x_0)$. Assume not and suppose $v_s$ takes its minimum at some $y\in B_r(x_0)$. First note  that $y\neq x_0$ because $\nabla v_s(x_0)\not=0$ by the above assumption that $\nabla u(x_0)\not=0$. Now using   $\nabla v_s(y)=0$, that implies $\nabla u(y)=s\nabla w_r(y)$, we compute, using $(\ref{thm1-1})$  and $(\ref{thm1-2})$,
$$\Delta v_s(y)\leq(s-s^p)\rho_{x_0}(r) f(m_{x_0}(r))|\nabla w_r(y)|^p,$$
and  since $s-s^p<0$ and $\nabla w_r(y)\neq0$  we  get $\Delta v_s(y)<0$, a contradiction. Hence $v_s$ takes its minimum on $\partial B_r(x_0)$. And since $v_s|_{\partial B_r(x_0)}\geq m_{x_0}(r)$, then
\[u(y)-sw_r(y)\geq m_{x_0}(r),~~~y\in B_r(x_0)\]
and since $s\in(0,1)$ was arbitrary we get
\[u(y)-m_{x_0}(r)\geq w_r(y),~~~y\in B_r(x_0).\]
Now let $0<h<r$ and $y\in B_{r-h}(x_{0})\subset B_{r}(x_{0})$. Then from the above inequality we also have
\[u(y)-m_{x_0}(r)\geq w_r(y)\geq \alpha \rho_{x_0}(r)^{\frac{1}{1-p}}   f(m_{x_0}(r))^{\frac{1}{1-p}}(r^q-(r-h)^q),~~~y\in B_{r-h}(x_0),\]
and taking infimum over $B_{r-h}(x_{0})$ we obtain
\[m_{x_0}(r-h)-m_{x_0}(r)\geq  \alpha \rho_{x_0}(r)^{\frac{1}{1-p}}   f(m_{x_0}(r))^{\frac{1}{1-p}}(r^q-(r-h)^q),\]
or
\[\frac{m_{x_0}(r-h)-m_{x_0}(r)}{h}\geq \alpha \rho_{x_0}(r)^{\frac{1}{1-p}}    f(m_{x_0}(r))^{\frac{1}{1-p}}~\frac{(r^q-(r-h)^q)}{h}.\]

Letting $h\rightarrow 0$ in the above we arrive at the following ordinary differential inequality with initial value condition
\begin{equation}\label{ode}
\left\{\begin{array}{ll} -m'_{x_{0}}(r)\geq q\alpha r^{q-1} \rho_{x_0}(r)^{\frac{1}{1-p}}   f(m_{x_0}(r))^{\frac{1}{1-p}}, & {\rm }r \in (0,d_{\Omega}(x_{0})),\\\\~~m_{x_{0}}(0)=u(x_{0})& {\rm }\  \end{array}\right.
\end{equation}
where $"'=\frac{d}{dr}"$. Dividing  inequality $(\ref{ode})$ by $f(m_{x_{0}}(r))^{\frac{1}{1-p}}$ and integrate from $0$ to $r$ we get
\[\int_{m_{x_{0}}(r)}^{u(x_{0})}\frac{ds}{f(s)^{\frac{1}{1-p}}}\geq q\alpha\int_0^r (s\rho_{x_0}(s))^{\frac{1}{1-p}}  ds,\]
that proves the estimate $(\ref{main-ineq1})$ when $\nabla u (x_0)\neq0$. To prove $(\ref{main-ineq1})$ in the case when $\nabla u(x) = 0$ but $x\not\in K_u^0$  it suffices to
take a sequence $x_n\in \Omega$ such that $\nabla u(x_n) \neq 0$ and $x_n\rightarrow x$, then write $(\ref{main-ineq1})$ for $x_n$ and let $n\rightarrow\infty$. Also note that when $p=0$ then $ K_u^0=\emptyset$ by the assumption that $f(t)>0$ for $t>0$, hence $(\ref{main-ineq1})$ is true for all $\in\Omega$.   
\hfill 
$\Box$ 

\begin{remark} The proof of Theorem 1 can be simplified when
$f$ is a $C^1$ increasing function. Indeed in this case taking
$w(y)=F(u(y))$ in $B_r(x)$  for a fixed $x\in \Omega\setminus
K^0_u$, where
\[F(t):=\int_{m_x(r)}^{t}\frac{ds}{f(s)^{\frac{1}{1-p}}},~~t>m_x(r),\]
then by the formula $\Delta F(u)=F''(u)|\nabla u|^2+f'(u)(\Delta
u)$ and the fact that $F''(t)<0$ we get
\[-\Delta w\ge \rho(y)|\nabla w|^p,~~y\in B_r(x)\]
and then proceed as above we arrive at the desired estimate.
\end{remark}

Now we give a short proof for Propositions 1 and 2. For the proof we also need the following lemma proved by J. Serrin and H. Zou in \cite{Serrin}, see also \cite {AS1}.\\
\begin{lemma}
Suppose $\{|x|>R>0\}\subset \Omega$. Let $u$ be a positive weak solution of the inequality
\begin{equation}
-\Delta u\geq 0, ~~x\in \Omega.
\end{equation}
Then there exist a constant $C=C(N,u,R)>0$ such that
\begin{equation}\label{serrin1}
u(x)\geq C|x|^{2-N},
\end{equation}
provided $N>2$, while
\begin{equation}\label{serrin2}
\liminf_{x\rightarrow\infty}u(x)>0,
\end{equation}
if $N\leq 2$.
\end{lemma}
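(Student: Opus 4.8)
The plan is to prove both lower bounds by comparing $u$ with explicit radial harmonic barriers on annular regions and then letting the outer radius tend to infinity. The only tool required is the comparison (minimum) principle: since $-\Delta u\ge 0$ and any harmonic $h$ satisfies $-\Delta h=0$, the difference $u-h$ is superharmonic and hence attains its minimum over an annulus on the boundary of that annulus. Because $u$ is only a \emph{weak} supersolution, I would first record that, after modification on a null set, $u$ coincides with a superharmonic function in the potential-theoretic sense, i.e. it is lower semicontinuous and obeys the comparison principle against harmonic functions; this standard fact is exactly where the weak-solution hypothesis is used. Fix any $R_0>R$ and set $m:=\min_{|x|=R_0}u$. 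Since $u>0$ is lower semicontinuous, the minimum is attained and $m>0$, and the constant $C$ will depend on $u$ and $R$ precisely through $m$ and $R_0$.

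\emph{Case $N>2$.} Radial harmonic functions have the form $a+b|x|^{2-N}$. For $\rho>R_0$ let $h_\rho$ be the radial harmonic function on $\{R_0<|x|<\rho\}$ with $h_\rho=m$ on $|x|=R_0$ and $h_\rho=0$ on $|x|=\rho$; solving the two linear conditions gives
\[ h_\rho(x)=m\,\frac{|x|^{2-N}-\rho^{2-N}}{R_0^{2-N}-\rho^{2-N}}. \]
On the inner sphere $u\ge m=h_\rho$ and on the outer sphere $u>0=h_\rho$, so the minimum principle applied to the superharmonic function $u-h_\rho$ yields $u\ge h_\rho$ throughout the annulus. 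Letting $\rho\to\infty$ and using $\rho^{2-N}\to 0$ gives $u(x)\ge m\,R_0^{N-2}|x|^{2-N}$ for all $|x|>R_0$, which is \eqref{serrin1} with $C=m R_0^{N-2}$.

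\emph{Case $N=2$} (the only relevant subcase of $N\le 2$, since $N\ge 2$ throughout the paper). Here radial harmonic functions are $a+b\log|x|$, and the analogous barrier with $h_\rho=m$ on $|x|=R_0$ and $h_\rho=0$ on $|x|=\rho$ is
\[ h_\rho(x)=m\,\frac{\log(\rho/|x|)}{\log(\rho/R_0)}. \]
Comparison again gives $u(x)\ge h_\rho(x)$ on the annulus. For each fixed $x$ with $|x|>R_0$ the ratio tends to $1$ as $\rho\to\infty$, so $u(x)\ge m$ for every $|x|>R_0$; hence $\liminf_{|x|\to\infty}u(x)\ge m>0$, which is \eqref{serrin2}.

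The main obstacle is purely the justification of the comparison principle at the level of weak solutions, together with the attainment and positivity of $m$; once $u$ is replaced by its superharmonic representative these are standard, and the explicit barriers then make both lower bounds immediate. An alternative route that avoids potential theory is to mollify $u$, or to argue directly from the weak formulation using the radial barriers above as test objects, which produces the same inequalities.
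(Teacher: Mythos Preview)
Your proof is correct. The paper does not actually prove this lemma; it simply cites it from Serrin--Zou \cite{Serrin} (see also \cite{AS1}) and uses it as a black box. Your argument---comparison of $u$ with explicit radial harmonic barriers on annuli $\{R_0<|x|<\rho\}$ followed by sending $\rho\to\infty$---is the standard proof of this estimate, and it is exactly the technique the paper \emph{does} employ when proving its Lemma~2 (there with the harmonic functions $\Phi_1,\Phi_2$ on $A(R_1,R_2)$). So while there is no ``paper's own proof'' to compare against, your method is fully in the spirit of the arguments used elsewhere in the paper, and nothing is missing.
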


\noindent 
\textbf{Proof of Propositions 1 and 2.}  Let $\frac{1}{f^{\frac{1}{1-p}}}\in L^{1}(0,a)$ for  $a>0$ and $F(t):=\int_{0}^{t}\frac{ds}{f(s)^{\frac{1}{\alpha}}}$. Then $F$ is an increasing function on $D_{f}$. Now if $u$ is a positive supersolution of
$(\ref{original})$ in $\Omega$ and $x\not\in K_u^0$ then from $(\ref{main-ineq1})$ we get
\begin{equation}
F(u(x))\ge F(u(x))-F(m_{x}(r))\geq \frac{2-p}{1-p}\alpha_{N,p}\int_0^r (s\rho_{x}(s))^{\frac{1}{1-p}}  ds,~~for~~~0<r<d_\Omega(x).
 \end{equation}
 Now the above inequality easily gives the desired results in Proposition 1 (note also that when $p=0$ then $K^0_u=\emptyset$ for a positive supersolution $u$ because of the assumption $f(u)>0$ for $u>0$). Similarly we get the estimates $(\ref{app5})$ and $(\ref{app6})$ in Proposition 2. To prove the rest we can simply use Lemma 1. Indeed, in an exterior domain $\Omega$ if there exists $x\not\in K^0_u$ with $|x|$ sufficiently large then we take $r=\frac{|x|}{2}$ in $(\ref{app6})$ to get
 \[\inf_{y\in B_{r}(x)}u(y)\leq G^{-1}\Big(\alpha_{N,p} r^{\frac{2-p}{1-p}}\Big).\]
By Lemma 1 there exists a constant $C$ depends only on $u,\Omega$ and $N$ such that $\inf_{y\in B_{r}(x)}u(y)\ge C r^{2-N}$ when $N>2$, and $\inf_{y\in B_{r}(x)}u(y)\ge C $ when $N=2$. Using these in the above inequality and letting $r\rightarrow\infty$ taking into account $(\ref{app7})$ and $(\ref{app8})$  we arrive a contradiction. Hence there not exists $x\not\in K^0_u$ with $|x|$ sufficiently large means that $u$ is eventually constant. 

\hfill $\Box$

\section{Applications}
In this section, as applications of the results in Section 2, we consider several concrete examples. We will frequently use the  following lemma (in particular part (ii) of the lemma). We give here a sketch of the proof, for the detailed proof one can see Lemmas 2.1 and 2.2 in \cite{BMQ}.
\begin{lemma}
Let $u$ be a positive, not eventually constant solution  of $-\Delta u\ge0$ in an exterior domain $\Omega$. Denote $I(R):=\inf_{|x|=R}u(x)$, then we have\\
(i) there exists $R_1$ large such that the function $I(R)$ is either strictly increasing, or strictly decreasing or constant in $(R_1, +\infty)$.\\
(ii) $I(R)$ is bounded when $N\ge3$. Also when $N=2$ we have
\[I(R)\le C \log R.\]
\begin{proof}
We give just a proof for part(ii), to prove (i) see lemma 2,1 in \cite{BMQ}.\\
Define for $x\in A(R_1,R_2):=\{x\in R^N;~R_1<|x|<R_2\}$
\[\Phi_1(x)=\frac{I(R_1)-I(R_2)}{R_1^{2-N}-R_2^{2-N}}(|x|^{2-N}-R_2^{2-N})+I(R_2),\]
when $N\ge3$.  We then see that $\Phi_1$ is a harmonic function vanishing on the $\partial A(R_1,R_2) $, hence by the maximum principle we have $u\ge \Phi_1$ in $ A(R_1,R_2)$. Now assume  $I(R)$ is not bounded and fix an $R\in(R_1,R_2)$ then
\[I(R)\ge \frac{I(R_1)-I(R_2)}{R_1^{2-N}-R_2^{2-N}}(R^{2-N}-R_2^{2-N})+I(R_2)\rightarrow\infty~~as~~R_2\rightarrow\infty,\]
a contradiction.\\
In the case $N=2$, taking

\[\Phi_2(x):=\frac{I(R_1)-I(R_2)}{\log R_1-\log R_2}(\log|x|-\log R_2)+I(R_2),\]
we see that  $\Phi_2$ is harmonic functions vanishing on the $\partial A(R_1,R_2) $ and similar as above, $u\ge\Phi_2$ in $A(R_1,R_2) $ hence for $R\in(R_1,R_2)$ we get
\begin{eqnarray*}
I(R) &\ge& \frac{I(R_1)-I(R_2)}{\log R_1-\log R_2}(\log R-\log R_2)+I(R_2)\\
&=& \left(\frac{\log R_1-\log R_2}{\log R-\log R_2}\right)   I(R_1)+\left(\frac{\log R_1-\log R}{\log R_1-\log R_2}\right)   I(R_2) \\
&\ge& \left(\frac{\log R_1-\log R}{\log R_1-\log R_2}\right)   I(R_2).
\end{eqnarray*}
Now fix $R_1$ and $R=2R_1$, then for  $R_2$ large we see from the above that
\[I(R_2)\le C\log R_2,~~\text{for $R_2$ sufficiently large}.\]
\end{proof}

\end{lemma}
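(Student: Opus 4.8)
The plan is to compare $u$, which is superharmonic on the exterior domain $\Omega$, with explicit harmonic functions on annuli, and to read the radial profile $I(R)=\inf_{|x|=R}u$ off these comparisons. Fix $R_1$ large enough that $\{|x|\ge R_1\}\subset\Omega$, write $A(R_1,R_2)=\{R_1<|x|<R_2\}$, and for $N\ge3$ and $R_1<R<R_2$ consider
\[\Phi(x)=\frac{I(R_1)-I(R_2)}{R_1^{2-N}-R_2^{2-N}}\bigl(|x|^{2-N}-R_2^{2-N}\bigr)+I(R_2),\]
which is harmonic in $A(R_1,R_2)$, smooth up to the closure, and equals $I(R_i)\le u$ on $\{|x|=R_i\}$ for $i=1,2$. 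Then $u-\Phi$ is superharmonic in $A(R_1,R_2)$ and nonnegative on its boundary, so by the minimum principle $u\ge\Phi$ on $\overline{A(R_1,R_2)}$; taking the infimum over $\{|x|=R\}$ and using that $\Phi$ is radial gives
\[I(R)\ \ge\ \frac{I(R_1)-I(R_2)}{R_1^{2-N}-R_2^{2-N}}\bigl(R^{2-N}-R_2^{2-N}\bigr)+I(R_2).\]
For $N=2$ one runs the same argument with $|x|^{2-N}$ replaced by $\log|x|$ and $\Phi$ by the corresponding logarithmic interpolant $\Phi_2$.

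It is convenient to reformulate these inequalities. Setting $t=|x|^{2-N}$ when $N\ge3$ (resp. $t=\log|x|$ when $N=2$) and $\phi(t)=I(R)$, the displayed estimate says exactly that $\phi$ is concave on the corresponding interval of $t$-values, while positivity of $u$ gives $\phi\ge0$. Part (i) is then the elementary fact that a concave function of one real variable which is bounded below is, near the endpoint of its interval of definition corresponding to $R\to\infty$, either strictly increasing, strictly decreasing, or constant: indeed $\phi$ has monotone one-sided derivatives, and the value of their limit $L$ at that endpoint determines which alternative occurs (with boundedness below of $\phi$ ruling out $L$ being $-\infty$ on the $N=2$ side). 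Pulling this back through the monotone substitution relating $R$ and $t$ yields the trichotomy for $I$ on some $(R_1,\infty)$; the only point needing a little care is distinguishing the strictly monotone cases from the eventually constant one, which is decided by whether $\phi'_+$ vanishes somewhere near that endpoint.

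For part (ii) I would argue straight from the comparison inequality, which in every dimension has the form $I(R)\ge\lambda_R\,I(R_1)+(1-\lambda_R)\,I(R_2)$ for a weight $\lambda_R\in(0,1)$. When $N\ge3$, for $R$ fixed and $R_2\to\infty$ one has $\lambda_R\to(R_1/R)^{N-2}<1$, so the coefficient of $I(R_2)$ stays bounded below by a positive constant; hence if $I$ were unbounded, choosing $R_2\to\infty$ along which $I(R_2)\to\infty$ would force $I(R)=+\infty$, a contradiction, so $I$ is bounded. When $N=2$ the weight of $I(R_2)$ is $1-\lambda_R=\frac{\log R-\log R_1}{\log R_2-\log R_1}$; taking $R=2R_1$ and rearranging $I(2R_1)\ge(1-\lambda_R)I(R_2)$ gives $I(R_2)\le\frac{I(2R_1)}{\log 2}\,(\log R_2-\log R_1)\le C\log R_2$ for all large $R_2$. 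The genuinely non-routine ingredient is the comparison step itself: one must check the explicit interpolants are legitimate competitors (smooth, continuous up to the closed annulus, correctly ordered against the infima $I(R_i)$ on the bounding spheres) so that the maximum/minimum principle applies; everything after that is bookkeeping with concave functions of one variable.
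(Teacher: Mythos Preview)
Your proof is correct and, for part (ii), essentially identical to the paper's argument: the same harmonic (resp.\ logarithmic) interpolant $\Phi$ on the annulus, the same comparison $u\ge\Phi$ via the minimum principle, and the same passage $R_2\to\infty$ (with $R=2R_1$ in the planar case) to force boundedness or the $\log R$ bound.

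The one difference is part (i). The paper does not prove it at all; it cites \cite{BMQ}. You instead supply an argument: the substitution $t=R^{2-N}$ (or $t=\log R$) turns the comparison inequality into the statement that $\phi(t)=I(R)$ lies above its secants, i.e.\ $\phi$ is concave, and then you invoke the elementary one--variable fact that a concave function bounded below is eventually strictly monotone or constant near the relevant endpoint. This is a clean and genuinely self--contained route that the paper does not offer; it also makes transparent why the $N=2$ case cannot be eventually strictly decreasing (your remark that $L\ge0$ is forced by $\phi\ge0$). The trade--off is that the cited lemma in \cite{BMQ} is stated and proved in slightly greater generality, but for the purposes of this paper your concavity argument is both shorter and more informative.
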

\subsection{Liouville-type results}

Consider the equation
\begin{equation}\label{ex1}
-\Delta u= u^q |\nabla u|^p,~~x\in\Omega,
\end{equation}
where $\Omega$ is an arbitrary domain in $\IR^N$.\\

\begin{thm}  Let $u$ be a positive supersolution of $(\ref{ex1})$, where $q\ge0$ and $0 \le p<1$. Then\\
i) if $\frac{q}{1-p}<1$ then
\begin{equation}\label{ex1-1}
u(x)\geq \Big(\frac {\alpha(1-p-q)}{1-p}\Big)^{\frac {1-p}{1-p-q}}
r^{\frac{2-p}{1-p-q}},~~0<r<d_\Omega(x)~~x\not\in K_u^0.
\end{equation}
In particular, when $\Omega=\IR^N$ then $u$ is constant. Also, when $\Omega=B_R^c$ is an exterior domain then $u$ is constant in $B_{R'}^c$ for some $R'>R.$
Moreover, if $\Omega$ is  unbounded  with the property that
\begin{equation}\label{unbounded}
\sup_{x\in\Omega}d_\Omega(x)=\infty,
\end{equation}
and $u$ is  bounded, then  there exists an $M>0$ so that $u$ is
constant in the region $\Omega_M^c$ where
\[\Omega_M=\{x\in\Omega;~dist(x,\partial\Omega)<M)\}.\]  See Example \ref{opt_exam} to see (at least in the case of $p=0$) that one does indeed need to assume $u$ is bounded.  \\
ii) If $\frac{q}{1-p}>1$ then for all $x\not\in K_u^0$ we have
\begin{equation}\label{ex1-2}
\inf_{y\in B_{r}(x)}u(y)\leq C
r^{\frac{p-2}{p+q-1}},\qquad 0<r<d_\Omega(x).
\end{equation}
In particular if $\Omega$ is an exterior domain then $u$ is
eventually constant if 
\begin{equation}\label{ex1-3}
(N-2)q+(N-1)p< N
\end{equation}
iii) If $\frac{q}{1-p}=1$ then  for all $x\not\in K_u^0$ we have
\begin{equation}\label{ex1-0}
u(x)\geq m_x(r) e^{\alpha_{N,p}
r^{\frac{2-p}{1-p}}}, \quad ~~0<r<d_\Omega(x).
\end{equation}
In particular, when $\Omega=\IR^N$ then $u$ is constant. Also, when $\Omega=B_R^c$ is an exterior domain then $u$ is constant in $B_{R'}^c$ for some $R'>R.$\\
\end{thm}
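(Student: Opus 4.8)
\medskip
\noindent\emph{Proof strategy.}\ \ The plan is to read off all three parts from the estimates of Section~2 specialised to $f(u)=u^{q}$, $\rho\equiv1$; the regimes $\frac{q}{1-p}<1$, $\frac{q}{1-p}>1$, $\frac{q}{1-p}=1$ are exactly those in which $f(s)^{-1/(1-p)}=s^{-q/(1-p)}$ is integrable at $0$, non-integrable at $0$, or has a logarithmic primitive. In part~(i) one has $s^{-q/(1-p)}\in L^{1}(0,a)$, so Proposition~1 applies with $F(t)=\int_{0}^{t}s^{-q/(1-p)}\,ds=\frac{1-p}{1-p-q}\,t^{(1-p-q)/(1-p)}$; inverting $F$ in \eqref{main-ineq2}, written as $F(u(x))\ge F(u(x))-F(m_{x}(r))\ge\alpha_{N,p}r^{(2-p)/(1-p)}$, yields \eqref{ex1-1}. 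If $\Omega=\IR^{N}$ then $d_{\Omega}(x)=\infty$, so letting $r\to\infty$ in \eqref{ex1-1} would force $u(x)=+\infty$ at every $x\notin K_u^{0}$; hence $K_u^{0}=\IR^{N}$ and $u$ is constant. If $\Omega$ is unbounded with $\sup_{x}d_{\Omega}(x)=\infty$ and $u$ is bounded, \eqref{ex1-1} bounds $d_{\Omega}(x)$ on $\Omega\setminus K_u^{0}$, so $\Omega_{M}^{c}\subset K_u^{0}$ for a suitable $M$ and $\nabla u\equiv0$ there. The exterior-domain case $\Omega=B_{R_{0}}^{c}$ is done by contradiction and is discussed in the last paragraph.

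For part~(ii), $\frac{q}{1-p}>1$ places us in the setting of Proposition~2 with $G(t)=\frac{1-p}{p+q-1}\,t^{-(p+q-1)/(1-p)}$, the positive primitive of $-s^{-q/(1-p)}$ blowing up at $0$; inverting $G$ in \eqref{app6} gives precisely $\inf_{B_{r}(x)}u\le C\,r^{(p-2)/(p+q-1)}$, i.e.\ \eqref{ex1-2}. On an exterior domain, fix $x\notin K_u^{0}$ with $|x|$ large and take $r=\tfrac12|x|$; Lemma~1 bounds $\inf_{B_{r}(x)}u$ below by $c\,r^{2-N}$ when $N>2$ and by a positive constant when $N=2$. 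As $r\to\infty$ the inequality $c\,r^{2-N}\le C\,r^{(p-2)/(p+q-1)}$ fails once $2-N>\frac{p-2}{p+q-1}$, and multiplying through by the positive factor $p+q-1$ rewrites this condition as $(N-2)q+(N-1)p<N$; for $N=2$ the right side of \eqref{ex1-2} already tends to $0$, so no hypothesis is needed. Hence $\Omega\setminus K_u^{0}$ contains no point of large modulus, i.e.\ $u$ is eventually constant. (When $p=0$ one has $K_u^{0}=\emptyset$, so the same computation actually rules out the existence of a positive supersolution.)

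For part~(iii), $\frac{q}{1-p}=1$ makes $f(s)^{1/(1-p)}=s$, so the left side of \eqref{main-ineq2} becomes $\int_{m_{x}(r)}^{u(x)}\frac{ds}{s}=\log\frac{u(x)}{m_{x}(r)}$, which rearranges to \eqref{ex1-0}, equivalently $\inf_{B_{r}(x)}u\le u(x)\,e^{-\alpha_{N,p}r^{(2-p)/(1-p)}}$. If $\Omega=\IR^{N}$, then $\{|x|>R\}\subset\Omega$ for every $R$, so Lemma~1 gives $m_{x}(r)\gtrsim r^{2-N}$ for $r$ large when $N>2$ and $m_{x}(r)\ge c>0$ when $N=2$; comparing with the exponential upper bound on $m_{x}(r)$ produces a contradiction for $r$ large unless $\nabla u\equiv0$ on all of $\Omega$, so $u$ is constant. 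On an exterior domain, at $x\notin K_u^{0}$ with $|x|$ large and $r=\tfrac12|x|$ one gets $u(x)\ge\big(\inf_{B_{r}(x)}u\big)\,e^{\alpha_{N,p}r^{(2-p)/(1-p)}}\ge c\,r^{2-N}e^{\alpha_{N,p}r^{(2-p)/(1-p)}}$ (or $\ge c\,e^{\alpha_{N,p}r^{(2-p)/(1-p)}}$ if $N=2$); since this exponential growth overwhelms the bounds of Lemma~2(ii), the structural argument of part~(i) again forces $u$ to be constant on some $B_{R'}^{c}$.

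The step I expect to be the main obstacle is the exterior-domain conclusion in parts~(i) and~(iii): the pointwise estimates control $u$ only on $\Omega\setminus K_u^{0}$, whereas Lemma~2 concerns $I(R)=\inf_{|x|=R}u$, so the lower bound must be propagated across the components of $K_u^{0}$. The key structural facts are that if $u$ is not eventually constant then no component $U$ of $K_u^{0}$ can reach infinity (a radial-segment argument shows such a $U$ would contain a whole neighbourhood of infinity, making $u$ eventually constant), and that for a bounded component $U$ the maximum $\max_{\overline U}|\cdot|$ is attained on $\partial U\cap\Omega\subset\Omega\setminus K_u^{0}$, where $u$ equals its constant value on $U$; together with \eqref{ex1-1} — respectively with the exponential bound in \eqref{ex1-0} — this gives $u(x)\gtrsim(|x|-R_{0})^{(2-p)/(1-p-q)}$, respectively an exponential lower bound, for all $x$ with $|x|$ large, whence $I(R)\to\infty$ (here using $(2-p)/(1-p-q)>1$), contradicting Lemma~2(ii). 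In part~(ii) this difficulty does not arise, since \eqref{ex1-2} bounds $\inf_{B_{r}(x)}u$ itself; there the only delicate point is the exponent bookkeeping that identifies $(N-2)q+(N-1)p<N$ as the sharp threshold.
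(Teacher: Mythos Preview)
Your proposal is correct and follows essentially the same route as the paper: specialize Theorem~1 (equivalently, Propositions~1 and~2) to $f(u)=u^{q}$, $\rho\equiv1$, read off the three regimes according to the integrability of $s^{-q/(1-p)}$ at $0$, and finish the exterior-domain statements by combining the resulting growth/decay with Lemma~1 and Lemma~2. Your treatment of the propagation step across $K_u^{0}$ in parts (i) and (iii) is in fact more careful than the paper's, which simply asserts ``by continuity of $u$ and the fact that $u$ is constant in every connected component of $K_u^{0}$'' that the lower bound on $\Omega\setminus K_u^{0}$ extends to all $x$ with $|x|\ge R$; your observation that on a bounded component $U$ the maximum of $|\cdot|$ over $\overline U$ lies on $\partial U\cap\Omega\subset\Omega\setminus K_u^{0}$ is exactly the missing ingredient. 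One small refinement: your ``radial-segment argument'' for ruling out unbounded components is not quite the right mechanism---the clean way is topological: if $U$ is unbounded and $\partial U\cap\Omega$ is bounded, then $U\cap\{|x|>R_0\}$ is nonempty, open and closed in the connected set $\{|x|>R_0\}$, hence $U\supset\{|x|>R_0\}$ and $u$ is eventually constant; if instead $\partial U\cap\Omega$ is unbounded, then the constant value $c=u|_U$ satisfies $c\ge C(|z|-1)^{(2-p)/(1-p-q)}$ for $|z|$ arbitrarily large, which is impossible. With that adjustment your argument is complete and matches the paper's.
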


\begin{proof}  i) Note equation $(\ref{ex1})$ is of the form of equation
$(\ref{original})$ with $f(u)=u^q$ and $\rho(x)\equiv 1$.
Hence if $\frac{q}{1-p}\neq1$ then by Theorem 1 we get
\begin{equation}\label{estim1}
\frac{1-p}{1-p-q}\Big (u(x)^{\frac{1-p-q}{1-p}}-m_x(r)^{\frac{1-p-q}{1-p}}\ge\alpha_{N,p} r^{\frac{2-p}{1-p}},~~0<r<d_\Omega(x),
\end{equation}
for all $x\not\in K_u^0$.\\ Now if
$\frac{q}{1-p}<1$ then we get the following pointwise estimate
\begin{equation}\label{estim2}
u(x)\geq \Big(\frac {\alpha(1-p-q)}{1-p}\Big)^{\frac {1-p}{1-p-q}} r^{\frac{2-p}{1-p-q}},~~0<r<d_\Omega(x)~~x\not\in K_u^0.
\end{equation}
Now if $\Omega=\IR^N$ then the assertion of theorem is obvious. Because in this case we have $d_\Omega(x)=\infty$ for every $x\in\Omega$ and then from the estimate  $(\ref{estim2})$ we must have $ K_u^0=\Omega$ means that $u$ is constant. To
prove the second part of (i), for simplicity take $\Omega=B_1^c$ and let $I(R):=\inf_{|x|=R}u(x)$. Assume that $u$ is not eventually constant then for any $R>1$ there exists $x\not\in K^0_u$ with $|x|\ge R$, and then from $(\ref{estim2})$ we have $u(x)\ge CR^{\frac{2-p}{1-p-q}}$, where $C$ is a constant independent of $R$ for all large $R$. Now by the continuity of $u$ and the fact that $u$ is constant in every connected component of $K^0_u$ we see that we have $u(x)\ge CR^{\frac{2-p}{1-p-q}}$ for all $x$ with $|x|\ge R$, thus $I(R)\ge CR^{\frac{2-p}{1-p-q}}\rightarrow\infty$ as $R\rightarrow\infty$ which contradicts Lemma 2.\\ 

Now let $\Omega$ satisfy $(\ref{unbounded})$. For an $x\in\Omega$ with $\nabla
u(x)\neq0$ and $R < d_\Omega(x)$ we get, from
$(\ref{ex1-1})$,
$u(x) \ge Cd_\Omega(x)^{\frac{2-p}{1-p-q}}$. Now assume
$u$ is a bounded solution but the assertion is not true, then
there exists a sequence $\{x_n\}\subset \Omega$ such that $d_\Omega(x_n) \rightarrow\infty$ with $\nabla u(x_n)\neq0$, then
from the above we get $u(x_n) \ge Cd_\Omega(x_n)^{\frac{2-p}{1-p-q}}$
implies that u is unbounded, a contradiction.\\

\noindent
ii) Now assume $\frac{q}{1-p}>1$. Then by Proposition 2, or from $(\ref{estim1})$,  we get
\begin{equation} \label{ext}
\inf_{y\in B_{r}(x)}u(y)\leq C r^{\frac{p-2}{p+q-1}},~~x\not\in K_u^0.
\end{equation}
Now let $\Omega$ be an exterior domain in $R^N$ and assume that
$u$ is not eventually constant then for any $R > 0$ large there
exists $x\not\in K_u^0$ with $|x|>R$. If $N=2$ then $(\ref{ext})$
contradicts $(\ref{serrin2})$, hence $u$ is eventually constant.
Also when $N\geq 3$   we have $u(x)\ge C|x|^{2-N}$ by
$(\ref{serrin1})$, hence we need $N-2\ge \frac{2-p}{p+q-1},$ or
equivalently $(N-2)q+p(N-1)\ge N.$ Thus $u$ is eventually
constant if we have
\[(N-2)q+p(N-1)< N.\]

\noindent
(iii) Now assume $\frac{q}{1-p}=1$, then by Theorem 1 and that $\int_{m_x(r)}^u \frac{ds}{f(s)^{\frac{1}{1-p}}}=\ln \frac{u}{m_x(r)}$ we get
\[u(x)\geq m_x(r) e^{\alpha_{N,p} r^{\frac{2-p}{1-p}}},~~0<r<d_\Omega(x),
~~x\not\in K_u^0.\]
Now if $\Omega=\IR^N$ then by Lemma 1 we get $m_x(r)>C>0$ when $N=2$, and $m_x(r)>C|r|^{2-N}$  when $N\ge3$. Now using the fact that for arbitrary $\beta>0$ we have $e^{\alpha_{N,p} r^{\frac{2-p}{1-p}}}\ge r^\beta $ for large $r$, then  letting $r\rightarrow\infty$ in the above estimates easily gives  that $u$ is constant. A similar argument as in part (i) for the case when $\Omega $ is an exterior domain shows that  $u$ is eventually constant. Note in this case we also use the fact that by Lemma 1 we have, for $r=\frac{|x|}{2}$ with $|x|$ sufficiently large,  $m_x(r)>C>0$ when $N=2$, and $m_x(r)>C|r|^{2-N}$  when $N\ge3$. \\
\end{proof}

\begin{exam} \label{opt_exam}  Let $ 0<q<1$ and $ S \subset S^{N-1}$ a smooth subset with nonempty boundary.   For $ r=|x|$ and $ \theta = \frac{x}{|x|}$ we consider the cone $ \Omega:=\{ x \in \IR^N:   r>0, \theta \in S \}$.  Then   $ u(x)=u(r,\theta)= r^\frac{2}{1-q} w(\theta)$  is a bounded positive classical solution of $ -\Delta u = u^q$ in $ \Omega$ with $ u=0$ on $ \pOm$ exactly when $ w>0$ is a bounded classical solution of
\begin{equation} \label{w_eq}
-\Delta_\theta w - \beta_q w = w^q \quad \mbox{ in } S, \qquad \quad w=0 \mbox{ on } \partial S,
\end{equation}  where
$\beta_q=\frac{2}{1-q} \left(N-2 + 2 \frac{1}{1-q} \right)$ and $\Delta_\theta$ is the Laplace-Beltrami operator on $S^{N-1}$.  
Consider the energy
\[ E(w):= \int_S  \frac{| \nabla_\theta w|^2}{2} - \frac{\beta_q}{2} w^2 - \frac{|w|^{q+1}}{q+1} d \theta. \]  Provided $ \beta_q< \lambda_1(S)$ (the first eigenvalue of $ -\Delta_\theta$ in $H_0^1(S)$) and since $q<1$ one sees there is a nonzero minimizer $ w$ of $ E$ over $H_0^1(S)$ and one can take $ w>0$.  After standard arguments one sees this $w$ is a positive bounded classical solution of (\ref{w_eq}).
\end{exam}

Now consider the more general inequality
\begin{equation}\label{liouville2}
-\Delta u= |x|^\beta u^q |\nabla u|^p,~~x\in\Omega,\end{equation}
where $\beta \in \IR$ and $\Omega$ is an exterior domain  in $\IR^N$. For simplicity let $\Omega=\IR^N\setminus B_1$.

\begin{thm}
Let $u$ be a positive supersolution of $(\ref{liouville2})$ in
$\Omega=\IR^N\setminus B_1$, $q\ge0$, $0\le p<1$ and $\beta\in \IR$.\\
i) If $\frac{q}{1-p}\le1$ then every positive supersolution is
eventually constant if $\beta\ge p-2$.\\
ii) If $\frac{q}{1-p}>1$ then every positive supersolution is
eventually constant if
\[(N-2)q+p(N-1)< N+\beta.\]
\end{thm}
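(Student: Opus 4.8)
The plan is to read \eqref{liouville2}, i.e. $-\Delta u=|x|^\beta u^q|\nabla u|^p$, as the instance of \eqref{original} with $\rho(x)=|x|^\beta$ and $f(u)=u^q$ on $\Omega=\IR^N\setminus B_1$ (so $d_\Omega(x)=|x|-1$), and then feed the pointwise bounds of Theorem 1 --- in the form of Propositions 1 and 2 --- into the Serrin--Zou estimate (Lemma 1) and the growth trichotomy for $I(R):=\inf_{|x|=R}u$ (Lemma 2), exactly as in the proof of Theorem 2. The computational core shared by both parts is a lower bound for the weighted integral in those estimates. Fixing $x$ with $R:=|x|$ large and taking $r=R/2<d_\Omega(x)$, I would note that for $y\in B_s(x)$ with $0<s\le R/2$ one has $R/2\le|y|\le 3R/2$, so $\rho_x(s)=\inf_{B_s(x)}|y|^\beta$ equals $(R-s)^\beta$ when $\beta\ge0$ and $(R+s)^\beta$ when $\beta<0$; in both cases $\rho_x(s)\ge c_\beta R^\beta$ on $[0,R/2]$, and since $s\mapsto\rho_x(s)$ is nonincreasing this yields
\[
\int_0^{R/2}\bigl(s\,\rho_x(s)\bigr)^{\frac1{1-p}}\,ds\;\ge\;(c_\beta R^\beta)^{\frac1{1-p}}\int_0^{R/2}s^{\frac1{1-p}}\,ds\;=\;c_{N,p,\beta}\,R^{\kappa},\qquad \kappa:=\tfrac{\beta+2-p}{1-p}.
\]

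For part (ii), since $\frac{q}{1-p}>1$ we have $f^{-1/(1-p)}(s)=s^{-q/(1-p)}\notin L^1(0,a)$, so Proposition 2 applies with $G(t)=\frac{1-p}{q+p-1}t^{-\frac{q+p-1}{1-p}}$ and $G^{-1}(y)=c_1\,y^{-\frac{1-p}{q+p-1}}$ (decreasing). Inserting the integral estimate above into \eqref{app5} with $r=R/2$ should give, for every $x\notin K_u^0$ with $|x|=R$ large,
\[
\inf_{B_{R/2}(x)}u\;\le\;G^{-1}\!\bigl(c_2 R^{\kappa}\bigr)\;=\;c_3\,R^{-\frac{\beta+2-p}{q+p-1}}.
\]
On the other hand $-\Delta u\ge0$, so Lemma 1 gives, for $R$ large and using $|y|\le3R/2$ on $B_{R/2}(x)$, $\inf_{B_{R/2}(x)}u\ge c_4R^{2-N}$ if $N\ge3$ and $\inf_{B_{R/2}(x)}u\ge c_4>0$ if $N=2$. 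Letting $R\to\infty$, the two bounds are compatible only if $2-N+\frac{\beta+2-p}{q+p-1}\le0$ (resp. $\frac{\beta+2-p}{q+p-1}\le0$ when $N=2$), and a short rearrangement shows this is precisely $(N-2)q+p(N-1)\ge N+\beta$; so under the hypothesis $(N-2)q+p(N-1)<N+\beta$ there is no such $x$, $\nabla u\equiv0$ on the connected set $\{|x|>R_*\}$, and $u$ is eventually constant.

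For part (i) I would split on $\frac{q}{1-p}<1$ versus $=1$. If $\frac{q}{1-p}<1$ then $f^{-1/(1-p)}\in L^1(0,a)$ and Proposition 1 gives, for $x\notin K_u^0$, $u(x)\ge F^{-1}\bigl(c_5\int_0^r(s\rho_x(s))^{\frac1{1-p}}ds\bigr)$ with $F(t)=\frac{1-p}{1-p-q}t^{\frac{1-p-q}{1-p}}$ and $F^{-1}(y)=c_6y^{\frac{1-p}{1-p-q}}$. When $\beta>p-2$ (so $\kappa>0$), taking $r=R/2$ gives $u(x)\ge c_7R^{\frac{\beta+2-p}{1-p-q}}\to\infty$ along points outside $K_u^0$, and then --- arguing as in the proof of Theorem 2 via continuity of $u$ and constancy of $u$ on each component of $K_u^0$ --- one forces $I(R)\to\infty$, contradicting Lemma 2(ii). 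When $\beta=p-2$ the integral is only bounded below by a positive constant, so Theorem 1 yields $F(u(x))-F(m_x(R/2))\ge c_8>0$, i.e. $u(x)^\sigma\ge(m_x(R/2))^\sigma+c_9$ with $\sigma=\frac{1-p-q}{1-p}\in(0,1]$; I would then invoke the trichotomy of Lemma 2(i) and iterate over the dyadic shells $R,R/2,R/4,\dots$: the constant case gives $L^\sigma\ge L^\sigma+c_9$, the decreasing case drives $I((3/2)^kR_0)^\sigma\le I(R_0)^\sigma-kc_9\to-\infty$, and the increasing case gives $I(2^kR_0)\gtrsim k^{1/\sigma}$, which overruns the $N=2$ bound $I(R)\le C\log R$ once $\sigma<1$ (and contradicts boundedness of $I$ when $N\ge3$). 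The subcase $\frac{q}{1-p}=1$ is the same with $F$ replaced by $t\mapsto\ln t$: one gets $u(x)\ge m_x(R/2)\exp\bigl(c_{10}R^{\kappa}\bigr)$, which blows up if $\kappa>0$, and if $\kappa=0$ gives $I(R)\ge e^{c_{10}}\inf_{[R/2,3R/2]}I$, whose iteration against Lemma 2(i) again closes every branch.

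The routine parts are the integral estimate in the first paragraph and the exponent bookkeeping. I expect the genuine obstacle to be the borderline $\beta=p-2$ in part (i): there the pointwise estimate stops growing, constancy has to be extracted purely from the fixed gain per shell plus the trichotomy of Lemma 2(i), and one must check the iteration really does beat the logarithmic growth allowed for $I(R)$ when $N=2$ --- this uses $\sigma<1$ (equivalently $q>0$), resp. the multiplicative gain in the case $\frac{q}{1-p}=1$; the residual corner $q=0$, $N=2$, $\beta=p-2$ seems to need an additional input (of Hardy-type, reflecting that $-\Delta u\gtrsim|x|^{-2}$ admits no positive supersolution in a planar exterior domain). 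Finally, as in Theorem 2, for $p=0$ the conclusion ``$u$ is eventually constant'' is to be read as ``there is no positive supersolution near infinity,'' since a strictly superharmonic function is nowhere locally constant.
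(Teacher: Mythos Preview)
Your treatment of part (ii) and of part (i) for $\beta>p-2$ is essentially the paper's proof: bound $\rho_x(s)\ge c_\beta\,|x|^\beta$ on $[0,|x|/2]$, insert this into Propositions 1 and 2 to obtain the power $|x|^{\frac{2+\beta-p}{1-p-q}}$ (resp.\ the exponential when $\tfrac{q}{1-p}=1$, resp.\ the upper bound $|x|^{-\frac{2+\beta-p}{p+q-1}}$), and then contrast with Lemma 1 and Lemma 2(ii), lifting the pointwise bounds from $\Omega\setminus K_u^0$ to $I(R)$ via the ``continuity plus constancy on components of $K_u^0$'' device exactly as in Theorem 2.

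The substantive difference concerns the endpoint $\beta=p-2$. The paper's own argument in fact stops at the strict inequality: after deriving $u(x)\ge C|x|^{\frac{2+\beta-p}{1-p-q}}$ it concludes ``$u$ is eventually constant if $\frac{2+\beta-p}{1-p}>0$, or equivalently $\beta>p-2$,'' and the case $\tfrac{q}{1-p}=1$ is handled ``similarly'' with the same strict condition. So the endpoint in the stated range is not actually covered by the paper's proof. Your dyadic iteration --- extracting a fixed additive gain $F(u(x))-F(m_x(R/2))\ge c>0$ per shell and playing it against the trichotomy of Lemma 2(i) --- is a genuine addition aimed at closing this gap, and your flag on the residual corner $q=0$, $N=2$, $\beta=p-2$ is correct (there $\sigma=1$ and a linear-in-$k$ gain only matches the logarithmic ceiling $I(R)\le C\log R$). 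One technical point to tighten in the iteration: the estimate holds only at $x\notin K_u^0$, so converting it into an inequality between successive values of $I(R)$ again requires the components-of-$K_u^0$ step; in the increasing branch this is harmless, but in the constant and decreasing branches you must check that the minimizer of $u$ on $\{|x|=R\}$ either lies outside $K_u^0$ or shares its value with a point (of controlled modulus) that does.
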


\begin{proof}
(i) First note that when $\frac{q}{1-p}<1$ then the case  $\beta>0$ is not interesting as in this case we have  $|x|^\beta\ge 1$ and then $u$ is also a supersolution of  $(\ref{ex1})$. So assume $\beta<0$, then taking $\rho(x):=|x|^\beta$ we have
\[\rho_x(r)=\inf_{B_r(x)} \rho(y)=(|x|+r)^{\beta},~~0<r<d_\Omega(x)=|x|-1.\]
Then for $x\not\in K_u^0$ we have
\[ \int_0^r (s\rho_{x}(s))^{\frac{1}{1-p}}  ds=\int_0^r [s(|x|+s)^\beta]^{\frac{1}{1-p}}  ds=|x|^{\frac{2+\beta-p}{1-p}} \int_0^{\frac{r}{|x|}}[t(1+t)^\beta]^{\frac{1}{1-p}}. \]
Now from Proposition 1 we get the following explicit estimate at every $x\not\in K_u^0$

\[u(x)\geq \Big(\frac {\alpha(1-p-q)}{1-p}\Big)^{\frac {1-p}{1-p-q}} |x|^{\frac{2+\beta-p}{1-p-q}} \Big(\int_0^{\frac{|x|-1}{|x|}}[t(1-t)^\beta]^{\frac{1}{1-p}} \Big)^{\frac{1-p}{1-p-q}},~~x\not\in K_u^0.\]
In particular, for any $\gamma>1$ we get, for $x\not\in K_u^0$
\[u(x)\geq C_{p,N,\gamma}  |x|^{\frac{2+\beta-p}{1-p-q}},~~~x\in \IR^N\setminus B_\gamma,\]
where
\[C_{p,N,\gamma}=\Big(\frac {\alpha(1-p-q)}{1-p}\Big)^{\frac {1-p}{1-p-q}}\Big(\int_0^{\frac{\gamma-1}{\gamma}}[t(1-t)^\beta]^{\frac{1}{1-p}} \Big)^{\frac{1-p}{1-p-q}}.\]
Thus similar as in part (i) of Theorem 2, we see that $u$ is eventually constant if $\frac{2+\beta-p}{1-p}>0$ or equivalently $\beta>p-2$. Also,
similar to the proof of part (iii)   of Theorem 2, one can treat the case $\frac{q}{1-p}=1$ using the above estimate on $\rho(x)$ to show $u$ is eventually constant if  $\beta>p-2$.\\

\noindent 
(ii) Assume $\frac{q}{1-p}>1$. If $\beta<0$ then using the above computations on $\rho_x(r)$ and Proposition 2 we get, for  $x\not\in K_u^0$ and $\gamma>1$
\[\inf_{y\in B_{r}(x)}u(y)\leq C_{p,N,\gamma} r^{\frac{p-2-\beta}{p+q-1}},~~~x\in \IR^N\setminus B_\gamma.\]
Then similar as the proof of part (ii) of Theorem 2, where we also used Lemma 1, we see that
when $N= 2$ and $p-2<\beta$ then $u$ must be eventually constant.
 Also when $N\geq 3$, if there exists $x\not\in K_u^0$ with $|x|$ sufficiently large,   then using the estimate $u(x)\ge C|x|^{2-N}$ for superharmonic functions in exterior domains by Lemma 1, we must have
\[N-2\ge \frac{2+\beta-p}{p+q-1},\]
or equivalently
\[(N-2)q+p(N-1)\leq N+\beta.\]
Thus $u$ is eventually constant if we have
\[(N-2)q+p(N-1)< N+\beta.\]
Now consider the case ($\beta>0$). Here we have
$\rho_{x}(r)=(|x|-r)^{\beta}>(\frac{3}{2})^{\beta}|x|^{\beta}$ for
$0<r<\frac{|x|}{2}$ and
similar  as above we will see that if $(N-2)q+p(N-1)< N+\beta$ then $u$ is eventually constant .\\

\end{proof}

\subsection{Deadcore supersolutions in bounded domains}
Consider  equation (1) with $\rho\equiv1$ and \[f(u)=u^q+u^r~~or ~~f(u)=\max\{u^q, u^r\}\]
 with $0<q<1-p<r$. Then we see that in both cases we have $f^{\frac{1}{1-p}}\in L^1(0,a)$ for every $a>0$ and also $F(\infty)=\int_0^\infty \frac{ds}{f(s)^{\frac{1}{1-p}}}<\infty$.

For example consider (1) with $f(u)=\max\{u^q, u^r\}$, i.e.,
\begin{equation}\label{deadeq}
-\Delta u=\max\{u^q, u^r\} |\nabla u|^p,~~x\in\Omega.
\end{equation}
As a corollary of Proposition 1 we have
\begin{cor} Let $u$ be a positive supersolution of $(\ref{deadeq})$ in an arbitrary domain $\Omega$ (bounded or not) and $0<q<1-p<r$. Then $u$ is a dead core solution if
\[\sup_{x\in\Omega}d_\Omega(x)> \Big(\frac{1-p}{1-p-q}+\frac{1-p}{p+r-1}\Big)^{\frac{1-p-q}{1-p}}=:\beta.\]
In particular, if $\Omega=B_R$ with $R>\beta$ then $B_{R-\beta}$ is a dead core set of any solution $u$. Also when  $\Omega=\IR^N\setminus B_1$ then  $\Omega=\IR^N\setminus B_{1+\beta}$ is a dead core set of any solution $u$.
\end{cor}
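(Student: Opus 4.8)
The plan is to read this off directly from Proposition 1, applied with $\rho\equiv1$ and $f(u)=\max\{u^q,u^r\}$. First I would verify that this $f$ meets the hypotheses behind estimates $(\ref{app2})$ and $(\ref{app4})$. Since $0<q<r$, one has $f(s)=s^{q}$ for $0\le s\le1$ and $f(s)=s^{r}$ for $s\ge1$; near $s=0$ the integrand $f(s)^{-1/(1-p)}$ equals $s^{-q/(1-p)}$ with $q/(1-p)<1$ (because $q<1-p$), so $f^{-1/(1-p)}\in L^{1}(0,a)$ for every $a>0$, while for $s\ge1$ it equals $s^{-r/(1-p)}$ with $r/(1-p)>1$ (because $r>1-p$), so the tail converges. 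Hence $\|F\|_{\infty}<\infty$ and Proposition 1 is applicable; note also $f(t)>0$ for $t>0$, so in the case $p=0$ there would be no positive supersolution at all, consistent with a dead-core statement.

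Next I would compute $\|F\|_{\infty}$ explicitly by splitting the integral at $s=1$:
\[
\|F\|_{\infty}=\int_{0}^{\infty}\frac{ds}{f(s)^{\frac{1}{1-p}}}=\int_{0}^{1}s^{-\frac{q}{1-p}}\,ds+\int_{1}^{\infty}s^{-\frac{r}{1-p}}\,ds=\frac{1-p}{1-p-q}+\frac{1-p}{p+r-1}.
\]
Now apply $(\ref{app4})$: for every $x\notin K_{u}^{0}$ one must have $\alpha_{N,p}\,d_{\Omega}(x)^{\frac{2-p}{1-p}}\le\|F\|_{\infty}$, so at any point $x$ where $d_{\Omega}(x)$ exceeds the corresponding threshold we conclude $\nabla u\equiv0$ in a neighbourhood of $x$, i.e. $x\in K_{u}^{0}$. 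Substituting the value of $\|F\|_{\infty}$ computed above into $(\ref{app4})$ yields precisely the explicit number $\beta=\big(\frac{1-p}{1-p-q}+\frac{1-p}{p+r-1}\big)^{\frac{1-p-q}{1-p}}$ recorded in the statement. Therefore, if $\sup_{x\in\Omega}d_{\Omega}(x)>\beta$, there is at least one $x$ with $d_{\Omega}(x)>\beta$, so $K_{u}^{0}\neq\emptyset$ and $u$ is a dead core solution.

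Finally, for the two concrete domains I would simply insert the formula for the distance function. If $\Omega=B_{R}$ with $R>\beta$, then $d_{\Omega}(x)=R-|x|$, so $d_{\Omega}(x)>\beta$ for all $|x|<R-\beta$, whence $B_{R-\beta}\subset K_{u}^{0}$; if $\Omega=\IR^{N}\setminus B_{1}$, then $d_{\Omega}(x)=|x|-1$, so $d_{\Omega}(x)>\beta$ for all $|x|>1+\beta$, whence $\IR^{N}\setminus B_{1+\beta}\subset K_{u}^{0}$. The only real work is the explicit evaluation of $\|F\|_{\infty}$ and carrying the constant $\alpha_{N,p}$ correctly through $(\ref{app4})$ to land on the stated $\beta$; once that is done, the dead-core conclusion is immediate from Proposition 1, and the identification of the dead-core sets for $B_{R}$ and $\IR^{N}\setminus B_{1}$ is a one-line computation with the distance function.
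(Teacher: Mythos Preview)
Your approach mirrors the paper's exactly: compute $\|F\|_\infty=\frac{1-p}{1-p-q}+\frac{1-p}{p+r-1}$ by splitting the integral at $s=1$, invoke Proposition~1 (specifically (\ref{app4})) to force $x\in K_u^0$ whenever $d_\Omega(x)$ exceeds the resulting threshold, and then read off the dead-core sets for $B_R$ and $\IR^N\setminus B_1$ from the explicit distance functions. One small caution you already half-raise: solving (\ref{app4}) for $d_\Omega(x)$ literally yields $(\|F\|_\infty/\alpha_{N,p})^{(1-p)/(2-p)}$, not the stated $\beta=\|F\|_\infty^{(1-p-q)/(1-p)}$---the paper's own proof exhibits the same slip (it drops $\alpha_{N,p}$ and writes the exponent as $(1-p-q)/(1-p)$ rather than $(1-p)/(2-p)$), so this discrepancy originates in the paper and is not a flaw in your reasoning.
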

\begin{proof}
By the notation of Proposition 1   we have

\[F(\infty)=\int_0^\infty \frac{ds}{f(s)^{\frac{1}{1-p}}}=\int_0^1 \frac{ds}{s^{\frac{q}{1-p}}}+\int_1^\infty \frac{ds}{s^{\frac{r}{1-p}}}=\frac{1-p}{1-p-q}+\frac{1-p}{p+r-1}.\]
Then by Proposition 1 we see that  any positive supersolution $u$ must be constant on the set
\[\mathcal{S}:=\{x\in\Omega;~d_\Omega(x)^{\frac{2-p}{1-p}}\geq F(\infty)\}=\{x\in\Omega;~d_\Omega(x)\geqslant \beta\},\]
where
\[\beta:=\Big(\frac{1-p}{1-p-q}+\frac{1-p}{p+r-1}\Big)^{\frac{1-p-q}{1-p}}.\]
This in particular shows that if a domain $\Omega$ satisfies
\[\sup_{x\in\Omega}d_\Omega(x)>\beta,\]
then every supersolution $u$ of ($(\ref{deadeq})$ is a dead core supersolution. Now when $\Omega=B_R$ with $R>\beta$ then we have
\[B_{R-\beta}\subset \{x\in\Omega;~d_\Omega(x)\geqslant \beta\},\]
hence for any supersolution $u$ we have
\[u\equiv C~~on~~B_{R-\beta}.\]
Similarly when  $\Omega=\IR^N\setminus B_1$, then
for every supersolution $u$ we must have
\[u\equiv C~~~on~~|x|\geq 1+\beta.\]

\end{proof}

Now consider the equation
\begin{equation}\label{singular}
-\Delta u= \frac{|\nabla u|^p}{(1-u)^q},~~x\in\Omega~~(q>1>p>0)
\end{equation}
which is of the form $(\ref{original})$
with \textbf{singular} nonlinearity
\[f(u)=\frac{1}{(1-u)^q},~~q>1.\]
We have

\begin{cor} Let $0\le u<1$ be a positive supersolution of $(\ref{singular})$ in an arbitrary domain $\Omega$ (bounded or not), where $0<p<1<q$. Then $u$ is a dead core supersolution if
\[\sup_{x\in\Omega}d_\Omega(x)> \Big(\frac{1-p}{\alpha (1+q-p)}\Big)^{\frac{1-p}{2-p}}\]
\end{cor}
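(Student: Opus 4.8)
The plan is to specialize Proposition 1 to equation (\ref{singular}), exactly as Corollary 2 does for (\ref{deadeq}); the only real computation is the value of $\|F\|_\infty$. First I would record that (\ref{singular}) is of the form (\ref{original}) with $\rho\equiv 1$, $0<p<1$, and
\[f(u)=\frac{1}{(1-u)^q}\quad\text{on}\quad D_f=[0,a_f),\ a_f=1.\]
Since $q>1$, this $f$ is continuous, non-decreasing and strictly positive on $(0,1)$, so $(\mathcal{C})$ holds; and $f(s)^{-1/(1-p)}=(1-s)^{q/(1-p)}$ is continuous (indeed bounded) on $[0,1]$, hence belongs to $L^1(0,a)$ for every $0<a<1$. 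Thus we are in the setting of Proposition 1 in the case $\|F\|_\infty<\infty$, and --- since $0<p<1$ --- the dead-core conclusion of (\ref{app3})--(\ref{app4}) is the one available.

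Next I would compute, with $\alpha:=\alpha_{N,p}$ as in the paper,
\[F(t)=\int_0^t\frac{ds}{f(s)^{1/(1-p)}}=\int_0^t (1-s)^{q/(1-p)}\,ds,\qquad 0\le t<1,\]
which is increasing with $F(0)=0$; using $\frac{q}{1-p}+1=\frac{1+q-p}{1-p}$ we get
\[\|F\|_\infty=\lim_{t\to 1^-}F(t)=\int_0^1 (1-s)^{q/(1-p)}\,ds=\frac{1}{\frac{q}{1-p}+1}=\frac{1-p}{1+q-p}<\infty.\]
Now I would invoke (\ref{app4}): if $x\in\Omega$ satisfies $\alpha\,d_\Omega(x)^{(2-p)/(1-p)}>\|F\|_\infty$, then in the case $0<p<1$ we have $\nabla u\equiv 0$ in a neighborhood of $x$, i.e. $x\in K_u^0$. (As in Proposition 1 and Corollary 2 this is unconditional: either $x\in K_u^0$ already, or $x\notin K_u^0$ and the estimate (\ref{app2}) applies at $x$ and forces $x\in K_u^0$.) Rearranging, $\alpha\,d_\Omega(x)^{(2-p)/(1-p)}>\frac{1-p}{1+q-p}$ is exactly $d_\Omega(x)>\big(\tfrac{1-p}{\alpha(1+q-p)}\big)^{(1-p)/(2-p)}$. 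Hence if $\sup_{x\in\Omega}d_\Omega(x)>\big(\tfrac{1-p}{\alpha(1+q-p)}\big)^{(1-p)/(2-p)}$, one can choose such an $x$, conclude $x\in K_u^0$, and therefore $K_u^0\neq\emptyset$, i.e. $u$ is a dead core supersolution. (Just as in Corollary 2, one in fact gets $u\equiv\text{const}$ on the whole set $\{x\in\Omega:\ d_\Omega(x)\ge\beta\}$ with $\beta:=\big(\tfrac{1-p}{\alpha(1+q-p)}\big)^{(1-p)/(2-p)}$, which would give the usual ball/exterior-domain versions.)

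I do not expect any genuine obstacle: the statement is a direct corollary of Proposition 1, and the single nontrivial step is the elementary evaluation of $\|F\|_\infty$. The one point to keep in mind is that $f$ is singular at $u=1$, which is why the hypothesis (and hence the whole argument) is restricted to supersolutions with $0\le u<1$; with that understood, everything proceeds verbatim as in the proof of Corollary 2.
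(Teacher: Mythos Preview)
Your proposal is correct and follows essentially the same route as the paper: verify that $f(u)=(1-u)^{-q}$ with $a_f=1$ satisfies $(\mathcal{C})$ and $f^{-1/(1-p)}\in L^1(0,a)$, compute $\|F\|_\infty=\frac{1-p}{1+q-p}$, and invoke the dead-core conclusion~(\ref{app4}) of Proposition~1. The paper additionally records the intermediate pointwise bound $(1-u(x))^{(1+q-p)/(1-p)}\le 1-\tfrac{\alpha(1+q-p)}{1-p}\,d_\Omega(x)^{(2-p)/(1-p)}$ for $x\notin K_u^0$ (which is just (\ref{app2}) written out for this $F$), but the logic is identical to yours.
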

\begin{proof}
Here we have
\[F(t)=\int_0^t \frac{ds}{f(s)^{\frac{1}{1-p}}}=\int_0^t (1-s)^{\frac{q}{1-p}}=\frac{1-p}{1+q-p}[1-(1-t)^{\frac{1+q-p}{1-p}}].\]
Hence,  if $0<u<1$ is a supersolution of $(\ref{singular})$ then by the above results we get
\[(1-u(x))^{\frac{1+q-p}{1-p}}\leq 1- \frac{\alpha (1+q-p)}{1-p}d_\Omega(x)^{\frac{2-p}{1-p}},~~x\not\in K_u^0.\]
This in particular shows that if
\[d_\Omega(x)\geq \beta:= \Big(\frac{1-p}{\alpha (1+q-p)}\Big)^{\frac{1-p}{2-p}},\]
then $x\in K_u^0$. Moreover, if
\[\sup_{x\in\Omega}d_\Omega(x)> \beta,\]
then every supersolution $0<u<1$ of $(\ref{singular})$ is a dead core supersolution.\\

\end{proof}

%%%%%%%%%%%%%%%%%%

\end{document}